\newtheorem{The}{Theorem}[section]
\newtheorem{Lem}[The]{Lemma}
\theoremstyle{definition}
\newtheorem{defn}[The]{Definition}
\theoremstyle{remark}
\newtheorem{Rem}[The]{Remark}
\numberwithin{equation}{section}
\newcommand{\T}{\mathbb{T}}
\title[a non-differentiable $\alpha$ function]{Non-differentiability of $\alpha$ function at the boundary of flat}
\author{Jianlu Zhang}
\address{Department of Mathematics, Nanjing University\\Nanjing, China, 210093}
\email{jellychung1987@gmail.com}
\thanks{}
\subjclass{37J40,37J50}
\keywords{Aubry Mather theory, $\alpha$ function, mechanical systems}
\date{}
\begin{document}
\maketitle

\begin{abstract}
With the variational method introduced by J Mather, we construct a mechanical Hamiltonian system whose $\alpha$ function has a flat $\mathbb{F}$ and is non-differentiable at the boundary $\partial\mathbb{F}$. In the case of two degrees of freedom, we prove this phenomenon is stable under perturbations of Ma\~{n}\'{e}'s .
\end{abstract}

\section{Introduction}

Let $M$ be a smooth closed manifold with $TM$ as tangent bundle. We call such a function $L(x,v)\in C^r(TM,\mathbb{R})\;(r\geq 2)$ $\mathbf{Tonelli}$ $\mathbf{Lagrangian}$ if it satisfies:
\begin{itemize}
\item\textbf{convexity} : For all $x\in M$, $v\in T_x M$ the Hessian matrix $\frac{\partial^2 L}{\partial v_i\partial v_j}(x,v)$ is positive definiteness;
\item\textbf{superlinearity} : $\lim\limits_{\parallel v\parallel\longrightarrow\infty}\frac{L(x,v)}{\parallel v\parallel}=\infty$ uniformly on $(x,v)\in TM$;
\item\textbf{completeness}: All solutions of the corresponding Euler-Lagrangian equation are well defined for $t\in\mathbb{R}$.
\end{itemize}
Here the Euler-Lagrangian equation is given by:
\begin{equation*}
  \quad\quad\quad\quad\quad\quad\frac{d}{dt}\frac{\partial L}{\partial v}(x,v)=\frac{\partial L}{\partial x}(x,v),\quad(x,v)\in TM.\quad\quad\quad\quad\quad\quad\text{(E-L)}
\end{equation*}

\begin{Rem}
In the autonomous case, the completeness is natural under the first two assumptions. That's because we can get the Hamiltonian as
\[
H(x,v)=\frac{\partial L}{\partial v}(x,v)-L.
\]
From \cite{Carneiro1995} we know that along each orbit $(\gamma,\dot{\gamma})$ of Euler-Lagrangian equation $H(\gamma,\dot{\gamma})$ is constant. The superlinearity implies that the level sets of Hamiltonian are compact. This in turn assure the completeness of flow.
\end{Rem}
%Usually we take $\mathbb{T}^n$ as $M$, and $(x,\dot{x})$ as the local coordinate. It is well known that the Euler-Lagrangian equation is given by: \[
%\frac{d}{dt}\frac{\partial L}{\partial \dot{x}}(x,\dot{x})=\frac{\partial L}{\partial x}(x,\dot{x}).
%\]
Usually we take $M=\mathbb{T}^n$. Adding a closed 1-form $\eta_c$ with the cohomology class $[\eta_c]=c\in H^1(M, \mathbb{R})$, we get a new Tonelli Lagrangian $L-\eta_c$, denoted by $L-c$ for short. From \cite{Mather1991} we know that the E-L flow $(\gamma,\dot{\gamma}) $ of $L-\eta_c$ also satisfies the E-L equation of $L$. So we can define a c-minimal curve $\gamma\in C^1(\mathbb{R},M)$ if it satisfies:
\[
\mathbb{A}_c(\gamma)=\min_{\xi(a)=\gamma(a)\atop\xi(b)=\gamma(b)}\int_a^b(L-\eta_c)(\xi(t),\dot{\xi}(t))dt,\;\forall a<b\in\mathbb{R},\;\xi\in C^{ac}(\mathbb{R},M).
\]
All of the c-minimal orbit $(\gamma,\dot{\gamma})$ form a set denoted by $\widetilde{\mathcal{G}}(c)$, which is invariant under the Euler-Lagrangian flow $\Phi^t$.

Let $\mathcal{M}_{inv}$ be the set of $\Phi^t$-invariant propability measures on $TM$. We define the $\alpha$ function as follow:
\[
\alpha(c)=-\min_{\mu\in \mathcal{M}_{inv}}\int_{TM}L-\eta_c d\mu,\quad c\in H^1(M,\mathbb{R}).
\]
As is showed in \cite{Mather1991}, there exists at least one measure $\mu_c$ such that the minimum attains. We call this measure $\mu_c$ c-minimal measure. The union of the supports of all c-minimal measures is called Mather set, denoted by $\widetilde{\mathcal{M}}(c)$.

Since we know that $\alpha(c)$ is convex, finite everywhere and superlinear \cite{Mather1991}, we can define its conjugate function in the sense of convex analysis\cite{Rockafellar} as:
\[
\beta(h)=\min_{\rho(\mu)=h\atop\mu\in\mathcal{M}_{inv}}\int_{TM}L d\mu,\quad h\in H_1(M,\mathbb{R}),
\]
here $\rho(\mu)$ is defined via the De Rham inner product:
\[
\langle\rho(\mu),c\rangle\doteq\int\eta_c d\mu.
\]
$\beta(h)$ is also a convex, finite everywhere and superlinear function. From \cite{Mane1996} and \cite{Zheng} we can get the following properties:
\begin{The}
  \begin{itemize}
    \item If $\mu$ is a c-minimizing measure, we have $\rho(\mu)\in D^{-}\alpha(c)$.
    \item The maximal connected domain on which $\alpha$ function isn't strict convex is called a flat $\mathbb{F}$. $\forall c,c'\in int\mathbb{F}$ we have $\widetilde{\mathcal{M}}(c)=\widetilde{\mathcal{M}}(c')$.
    \item For each non-differential point c of $\alpha$ function, $\widetilde{\mathcal{M}}(c)$ corresponds to at least two ergodic components with different rotation vectors.
    \item If $h$ is a strict convex point of $\beta$ function, then there must exist one ergodic minimal measure $\mu$ with $\rho(\mu)=h$.
  \end{itemize}
\end{The}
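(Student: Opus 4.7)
The plan is to handle the four bullets in turn, since each is a short convex-analytic argument built on the definitions of $\alpha$, $\beta$, the rotation vector $\rho$, and ergodic decomposition of invariant measures.

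For the first bullet, let $\mu$ be a $c$-minimizing measure. Then by the definition of $\alpha$, for any $c'\in H^1(M,\R)$ one has
\[
\alpha(c')\;\geq\;-\int_{TM}(L-\eta_{c'})\,d\mu
\;=\;-\int_{TM}L\,d\mu+\langle\rho(\mu),c'\rangle,
\]
while $-\alpha(c)=\int L\,d\mu-\langle\rho(\mu),c\rangle$. Subtracting, $\alpha(c')-\alpha(c)\geq\langle\rho(\mu),c'-c\rangle$ for every $c'$, which is exactly the subgradient inequality $\rho(\mu)\in D^{-}\alpha(c)$.

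For the second bullet, fix $c\in\mathrm{int}\,\mathbb{F}$ and a $c$-minimal measure $\mu$. Since $c$ is interior, pick any $c'\in\mathbb{F}$ and write $c=\lambda c'+(1-\lambda)c''$ with $c''\in\mathbb{F}$ and $\lambda\in(0,1)$. The affine behaviour of $\alpha$ on $\mathbb{F}$ gives $\alpha(c)=\lambda\alpha(c')+(1-\lambda)\alpha(c'')$, whereas
\[
-\alpha(c)=\int L\,d\mu-\langle\rho(\mu),c\rangle
=\lambda\Bigl(\int L\,d\mu-\langle\rho(\mu),c'\rangle\Bigr)+(1-\lambda)\Bigl(\int L\,d\mu-\langle\rho(\mu),c''\rangle\Bigr)
\geq-\lambda\alpha(c')-(1-\lambda)\alpha(c''),
\]
using the definition of $\alpha$ at $c'$ and $c''$. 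Both lines must be equal, so $\mu$ is $c'$-minimal and $c''$-minimal. This gives $\widetilde{\mathcal{M}}(c)\subset\widetilde{\mathcal{M}}(c')$, and the reverse inclusion follows by interchanging $c$ and $c'$.

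For the third bullet, non-differentiability at $c$ means that $D^{-}\alpha(c)$ is a convex set with more than one point, hence it has at least two distinct extreme points $h_1\neq h_2$. By bullet one, the rotation vector map sends $c$-minimal measures into $D^{-}\alpha(c)$; conversely one shows, via a standard argument combining weak-$\ast$ compactness of $\{\mu:\mu\text{ is }c\text{-minimal}\}$ with the Krein-Milman theorem applied in the space of measures, that every extreme point of $D^{-}\alpha(c)$ is $\rho(\nu)$ for some ergodic $c$-minimal measure $\nu$. Producing such $\nu_1,\nu_2$ with $\rho(\nu_i)=h_i$ exhibits two ergodic components in $\widetilde{\mathcal{M}}(c)$ with different rotation vectors.

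For the fourth bullet, first note that strict convexity of $\beta$ at $h$ means $h$ is not an interior point of any non-trivial segment on which $\beta$ is affine. Mather's theorem furnishes at least one minimal measure $\mu$ with $\rho(\mu)=h$ and $\int L\,d\mu=\beta(h)$. Ergodic decomposition gives a probability measure $P$ on ergodic invariant measures such that $\mu=\int\nu\,dP(\nu)$, and then
\[
\beta(h)=\int L\,d\mu=\int\!\!\int L\,d\nu\,dP(\nu)\;\geq\;\int\beta(\rho(\nu))\,dP(\nu)\;\geq\;\beta\!\left(\int\rho(\nu)\,dP(\nu)\right)=\beta(h),
\]
so equality holds throughout. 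Strict convexity of $\beta$ at $h$ forces $\rho(\nu)=h$ for $P$-a.e.\ $\nu$, and the first equality forces such $\nu$ to satisfy $\int L\,d\nu=\beta(h)$, i.e.\ to be an ergodic minimal measure with rotation vector $h$.

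The bullets one, two, and four are direct convex-analytic computations once the definitions are in hand; I expect the main obstacle to be the third bullet, where one must bridge from the purely convex statement (non-singleton subdifferential) to a dynamical statement about ergodic components. The delicate point is to argue that every extreme point of $D^{-}\alpha(c)$ is actually realised by an ergodic $c$-minimal measure, which requires combining the linearity of $\rho$, the compactness of the set of $c$-minimal measures, and ergodic decomposition carefully, rather than merely invoking Jensen's inequality as in bullet four.
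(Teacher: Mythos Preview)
The paper does not prove this theorem; it records the four statements as known facts, citing Ma\~{n}\'{e} \cite{Mane1996} and Zheng \cite{Zheng}. Your independent argument is essentially correct and supplies what the paper omits.

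Bullets one, two, and four are clean. For bullet three your sketch is on the right track, but one ingredient is left implicit. To know that every extreme point $h$ of $D^{-}\alpha(c)$ is $\rho(\nu)$ for some $c$-minimal $\nu$, you need $\rho(\{c\text{-minimal measures}\})=D^{-}\alpha(c)$, not merely the inclusion $\subseteq$ given by bullet one. The reverse inclusion comes from Fenchel duality between $\alpha$ and $\beta$ (so $h\in D^{-}\alpha(c)$ iff $\alpha(c)+\beta(h)=\langle c,h\rangle$) together with Mather's existence theorem, which supplies an invariant $\mu$ with $\rho(\mu)=h$ and $\int L\,d\mu=\beta(h)$; a one-line computation then shows this $\mu$ is $c$-minimal. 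Once that equality of sets is in hand, the fiber over an extreme $h$ is a non-empty face of the compact convex set of $c$-minimal measures, and Krein--Milman together with the fact that extreme points of this set are ergodic finishes the argument exactly as you indicate.
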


We also need to define another two sets called Aubry set $\widetilde{\mathcal{A}}(c)$ and Ma\~{n}\'{e} set $\widetilde{\mathcal{N}}(c)$. First we define:
\[
h_c^t(x,y)\doteq\min_{\gamma\in C^1([0,t],M)\atop\gamma(0)=x,\gamma(t)=y}\int_0^t (L-\eta_c)(\gamma(s),\dot{\gamma}(s))+\alpha(c)ds,
\]
\[
\Phi_c(x,y)\doteq\inf_{t\in\mathbb{R}}h_c^t(x,y),
\]
and
\[
h_c^{\infty}(x,y)\doteq\liminf_{t\rightarrow+\infty}h_c^t(x,y).
\]
We call a curve $\gamma\in C^1(\mathbb{R},M)$ c-semi-static if
\[
\Phi_c(\gamma(a),\gamma(b))=\int_a^b(L-\eta_c)(\gamma(t),\dot{\gamma}(t))+\alpha(c)dt,\quad\forall a<b\in\mathbb{R},
\]
and a curve c-static if
\[
\Phi_c(\gamma(b),\gamma(a))+\int_a^b(L-\eta_c)(\gamma(t),\dot{\gamma}(t))+\alpha(c)dt=0,\quad\forall a<b\in\mathbb{R}.
\]
A c-static orbit must be a c-semistatic orbit \cite{Bernard P}. We call the union of all global c-semistatic orbits Ma\~{n}\'{e} set denoted by $\widetilde{\mathcal{N}}(c)$, and the union of all global c-static orbits Aubry set denoted by $\widetilde{\mathcal{A}}(c)$. From \cite{Bernard P} we have the following inclusions:
\begin{The}\label{Alpha}
\begin{itemize}
  \item $\widetilde{\mathcal{M}}(c)\subseteq\widetilde{\mathcal{A}}(c)\subseteq\widetilde{\mathcal{N}}(c)\subseteq\widetilde{\mathcal{G}}(c)$.
  \item We denote the projected set of $\widetilde{\mathcal{M}}(c)$ and $\widetilde{\mathcal{A}}(c)$ from $TM$ to $M$ by $\mathcal{M}(c)$ and $\mathcal{A}(c)$. Then the inverse of the project map $\pi$  \[\pi^{-1}:\mathcal{A}(c)\rightarrow\widetilde{\mathcal{A}}(c)\] is a Lipschitz graph.
  \item For an autonomous Lagrangian we have $\widetilde{\mathcal{N}}(c)=\widetilde{\mathcal{G}}(c)$. $\widetilde{\mathcal{N}}(c)$ is an upper-semicontinuous set-valued function of $c$.
  \item If c-minimal measure is uniquely ergodic, then we have $\widetilde{\mathcal{N}}(c)=\widetilde{\mathcal{A}}(c)$.
  \item $\forall c,c'\in int\mathbb{F}$ we have $\widetilde{\mathcal{A}}(c)=\widetilde{\mathcal{A}}(c')$, and $\widetilde{\mathcal{A}}(int\mathbb{F})\subseteq\widetilde{\mathcal{A}}(\partial\mathbb{F})$.
\end{itemize}
\end{The}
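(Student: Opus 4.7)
The plan is to verify each bullet of Theorem \ref{Alpha} in turn, drawing on \cite{Bernard P}, \cite{Mane1996}, and the Mather references cited earlier. Since the theorem is a compilation of classical results, the task is to show that each item falls out of the definitions of $\widetilde{\mathcal{M}}$, $\widetilde{\mathcal{A}}$, $\widetilde{\mathcal{N}}$, $\widetilde{\mathcal{G}}$ together with the basic properties of $\alpha$ and $\beta$ already recorded in the excerpt; no genuinely new argument is needed.

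For the chain of inclusions I would work from right to left. The inclusion $\widetilde{\mathcal{N}}(c)\subseteq\widetilde{\mathcal{G}}(c)$ is immediate because adding the constant $\alpha(c)$ to the integrand does not change what the minimizers are, so every c-semi-static orbit is an element of $\widetilde{\mathcal{G}}(c)$. The inclusion $\widetilde{\mathcal{A}}(c)\subseteq\widetilde{\mathcal{N}}(c)$ was already noted just before the theorem. To obtain $\widetilde{\mathcal{M}}(c)\subseteq\widetilde{\mathcal{A}}(c)$, I would integrate the defining identity of c-staticity against a c-minimal measure $\mu_c$; Birkhoff's ergodic theorem shows the integral vanishes on a full-measure set of orbits, and closedness plus $\Phi^t$-invariance of $\mathrm{supp}\,\mu_c=\widetilde{\mathcal{M}}(c)$, combined with lower semi-continuity of the action, promote this to every orbit in the support. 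The Lipschitz graph property in the second bullet is Mather's original theorem: the crossing lemma for c-minimizers together with a second-order Taylor expansion of $L$ using the uniform Hessian bound on a compact neighborhood of $\mathcal{A}(c)$ yields a quantitative estimate of the form $|v-v'|\leq K\,\mathrm{dist}(x,x')$ whenever two points of $\widetilde{\mathcal{A}}(c)$ project close to each other.

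For the third bullet I would exploit the autonomy of $L$. Energy conservation along solutions (noted in the opening Remark) shows that every orbit in $\widetilde{\mathcal{G}}(c)$ lies on a single level set of the Hamiltonian; a standard computation identifies this common energy value with $\alpha(c)$, so integration turns pointwise minimization of $L-\eta_c$ into the c-semi-static identity, giving $\widetilde{\mathcal{G}}(c)\subseteq\widetilde{\mathcal{N}}(c)$. Upper semi-continuity of $\widetilde{\mathcal{N}}$ in $c$ follows by a diagonal argument: a $C^1_{\mathrm{loc}}$-limit of $c_n$-semi-static orbits is c-semi-static because $\alpha$ is continuous and $h_c^t$ is jointly continuous in $(c,t)$. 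The fourth bullet is a short ergodic remark: under unique ergodicity, the $\alpha$- and $\omega$-limit sets of any c-semi-static orbit lie inside the single ergodic component, hence inside $\widetilde{\mathcal{A}}(c)$; unrolling the semi-static inequality along such a recurrent orbit forces c-staticity.

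The main obstacle is the fifth bullet. For $c,c'\in\mathrm{int}\,\mathbb{F}$ the affine behaviour of $\alpha$ on $\mathbb{F}$ implies that $\eta_{c'}-\eta_c$ integrates to zero on every c-minimal measure, so the two modified actions agree on invariant measures supported on the common Mather set; combined with the first theorem of the excerpt, which forces the rotation vectors of ergodic minimizers to lie in the common face $D^-\alpha(c)=D^-\alpha(c')$, this upgrades to an identification of the c- and c'-static orbits themselves. The inclusion $\widetilde{\mathcal{A}}(\mathrm{int}\,\mathbb{F})\subseteq\widetilde{\mathcal{A}}(\partial\mathbb{F})$ should then be extracted from upper semi-continuity of $\widetilde{\mathcal{N}}$ proved in the third bullet, together with the observation that a limiting orbit still minimizes the action of the common Mather measures and is therefore c-static, not merely semi-static, for the boundary classes. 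Coordinating the convex-analytic notion of a face with the dynamical notion of c-staticity along an orbit is where I expect the argument to require the most care.
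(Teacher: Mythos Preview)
The paper does not supply a proof of Theorem~\ref{Alpha} at all: it is stated as a compilation of known facts, prefaced by ``From \cite{Bernard P} we have the following inclusions,'' and the exposition moves on immediately afterwards. So there is nothing to compare against in the sense of a competing argument; your proposal already goes well beyond what the paper does by sketching the actual mechanisms behind each item.

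Your sketches for the first four bullets are the standard ones and are essentially correct. For the fifth bullet your outline is also along the right lines, but be aware that the step you flag as delicate really is the crux: upper semi-continuity of $c\mapsto\widetilde{\mathcal{N}}(c)$ only gives that a limit of interior Aubry orbits is semi-static for $c'\in\partial\mathbb{F}$, and promoting this to $c'$-staticity requires an extra argument (typically one shows directly that for any $c'\in\mathbb{F}$ and any $c$-static curve $\gamma$ with $c\in\mathrm{int}\,\mathbb{F}$, the difference $\int(\eta_{c'}-\eta_c)\,d\dot\gamma$ vanishes because the asymptotic rotation of $\gamma$ lies in $D^-\alpha(c)\subseteq D^-\alpha(c')$ and $\alpha(c)=\alpha(c')$). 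With that refinement your plan would constitute a complete proof, whereas the paper simply defers to the literature.
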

In \cite{Mane1996}, Ma\~{n}\'{e} raised the problem: Is it true that for a generic Lagrangian, it's $\alpha$ function is of $C^1$ smoothness? Here the `generic' means there is a residue subset $\mathcal{G}\subset C^{\infty}(M,\mathbb{R})$ such that the property holds for all the Tonelli Lagrangian $L-u,u\in\mathcal{O}$. Negative answer is firstly given in \cite{Saghin}, but we don't know whether there would be a flat $\mathbb{F}$ coming out, not to mention the stability.

Moreover, we know $int\mathbb{F}$ share the same Mather set but there may be new measure coming out at $c\in\partial\mathbb{F}$. If so, we can construct heteroclinic orbits between these different measures and explore much interesting dynamic behavior. This phenomenon was discovered by Zheng Yong in \cite{Zheng}. Based on such a premise:
\[
\exists c\in\partial\mathbb{F},\;s.t.\inf_{g\in H_1(M,\mathcal{N}(c^*),\mathbb{Z})\atop c^8\in int\mathbb{F}}h_c^{\infty}(g)=\delta>0,
\]
\[
h_c^{\infty}(g)\doteq\inf_{x\in\bar{M}}h_c^{\infty}(x,x+g).
\]
Here $\bar{M}$ is the universal cover space of $M$. He gets the following conclusion in $\bar{M}$.
\begin{The}\cite{Zheng}
  There exist infinitely many $\bar{M}$-minimal homoclinic orbits to $\widetilde{\mathcal{A}}(int\mathbb{F})$ which are not of multi-bump type.
\end{The}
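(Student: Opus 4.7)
The argument is variational. For each nonzero relative class $g\in H_1(M,\mathcal{N}(c^*),\Z)$ with $c^*\in int\mathbb{F}$, I realize the Peierls barrier $h_c^{\infty}(g)$ by an actual minimizing orbit. Concretely, I pick a minimizing sequence $x_k\in\bar M$ and times $t_k\to\infty$ with $h_c^{t_k}(x_k,x_k+g)\to h_c^{\infty}(g)$, and let $\xi_k:[0,t_k]\to\bar M$ be the corresponding action-minimizing curves. The standing hypothesis $h_c^{\infty}(g)\geq\delta>0$ together with superlinearity prevents $t_k$ from staying bounded, so by using the deck group action I may translate so that the $\xi_k$'s pass through a fixed compact set, and then apply standard Tonelli/Ascoli compactness to extract a $C^1_{loc}$ limit $\gamma_g:\R\to\bar M$.

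Next I identify $\gamma_g$ as a $\bar M$-minimal homoclinic to $\widetilde{\mathcal{A}}(int\mathbb{F})$. By construction $(\gamma_g,\dot\gamma_g)$ is $c$-semistatic, so by Theorem~\ref{Alpha} it lies in $\widetilde{\mathcal{N}}(c)=\widetilde{\mathcal{G}}(c)$. Its $\alpha$- and $\omega$-limits project into $\mathcal{A}(c)$, and the Lipschitz graph property of $\pi^{-1}$ over $\mathcal{A}(c)$ lets me lift these limit sets canonically into $\widetilde{\mathcal{A}}(c)$. The inclusion $\widetilde{\mathcal{A}}(int\mathbb{F})\subseteq\widetilde{\mathcal{A}}(c)$ from Theorem~\ref{Alpha}, combined with a barrier computation showing that any limit lying in $\widetilde{\mathcal{A}}(c)\setminus\widetilde{\mathcal{A}}(int\mathbb{F})$ would force $\gamma_g$ to close up with zero action in the class $g$ (contradicting $h_c^{\infty}(g)\geq\delta$), pins the limits into $\widetilde{\mathcal{A}}(int\mathbb{F})$. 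Hence $\gamma_g$ is a $\bar M$-minimal homoclinic in the class $g$.

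Finally, to get infinitely many and the non-multi-bump property I invoke the uniform lower bound $\delta$. If some $\gamma_g$ admitted a multi-bump decomposition with $k\geq 2$ nontrivial bumps of classes $g_1,\dots,g_k$ summing to $g$, then an approximate additivity argument using the Lipschitz graph to glue each bump across $\widetilde{\mathcal{A}}(int\mathbb{F})$ gives
\[
h_c^{\infty}(g)\;\geq\;\sum_{j=1}^{k}h_c^{\infty}(g_j)\;\geq\;k\delta,
\]
so any class with $h_c^{\infty}(g)<2\delta$ forces $k=1$. Since $H_1(M,\mathcal{N}(c^*),\Z)$ is infinite, either infinitely many classes have $h_c^{\infty}(g)<2\delta$ directly, yielding infinitely many distinct single-bump $\gamma_g$, or every sufficiently complex $\gamma_g$ splits into bumps drawn from a fixed finite list of prototypes, in which case a pigeonhole argument recovers infinitely many distinct single-bump building blocks.

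The main obstacle is making the additivity step quantitative: one has to show that gluing two $c$-static bumps across the Aubry set costs only $o(1)$ in action as the gluing points converge into $\widetilde{\mathcal{A}}(int\mathbb{F})$. This is a refinement of Mather's gluing lemma, and relies on the Lipschitz graph property of the Aubry set and the inclusion $\widetilde{\mathcal{A}}(int\mathbb{F})\subseteq\widetilde{\mathcal{A}}(\partial\mathbb{F})$, both provided by Theorem~\ref{Alpha}; without a genuine positive gain $\delta$ one could imagine the minimizer decomposing into arbitrarily many infinitesimal bumps, which is precisely what the standing hypothesis rules out.
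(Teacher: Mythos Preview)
The paper does not contain a proof of this theorem: it is quoted from \cite{Zheng} and stated without argument, serving only as motivation for the constructions in Sections~2 and~3. Consequently there is no ``paper's own proof'' against which to compare your proposal.

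That said, a few remarks on your sketch as a stand-alone attempt. The broad shape---realize $h_c^{\infty}(g)$ by a limiting semistatic curve, pin its $\alpha$- and $\omega$-limits into $\widetilde{\mathcal{A}}(int\mathbb{F})$ via the barrier hypothesis, then use the uniform gap $\delta$ to rule out multi-bump decompositions---is the right outline and matches the mechanism the paper alludes to (``This new measure forms a mechanism to construct these $\bar{M}$-minimal homoclinic orbits''). The step that is genuinely incomplete is your final counting argument. The dichotomy you propose (either infinitely many classes satisfy $h_c^{\infty}(g)<2\delta$, or else a pigeonhole on a finite prototype list) does not quite close: if only finitely many classes lie below $2\delta$, your ``pigeonhole'' recovers only finitely many single-bump building blocks, not infinitely many distinct homoclinic orbits. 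To make this work one needs an additional ingredient---for instance, exhibiting infinitely many \emph{indecomposable} classes directly, or producing infinitely many geometrically distinct minimizers within a single class by exploiting the structure of $\widetilde{\mathcal{A}}(int\mathbb{F})$. You should consult \cite{Zheng} for how this step is actually carried out.
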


Actually, under this premise, we can assure one new minimal measure's coming out at $\partial\mathbb{F}$ (sec.2, lemma2.1 in \cite{Zheng}). This new measure forms a mechanism to construction these $\bar{M}$-minimal homoclinic orbits. But he didn't give the answer whether if there exists one system satisfies his premise. Our construction gives a posititve answer and verifies the rationality of the premise. Also we can use his Theorem to get infinitely many $\bar{M}$-minimal homoclinic orbits for our example.
%In 1991, Mather raised a framework of variational method to deal with Hamiltonian systems and defined $\alpha$-function and $\beta$-function for the first time \cite{Mather1991}.
%Since then, many mathematicians have done a lot of research about these two functions and got many deep and precise results, such as \cite{Mane1996},\cite{Massart},\cite{Carneiro1995},\cite{Bernard P. Contreras G.}. Thanks to their works we now have a clearer understanding of global behavior in general Hamiltonian systems.

%Recently, Zheng Yong proved the existence of infinite many minimal homoclinics under a special premise in his paper [Zh]. But we still can not explain the exact dynamical meaning of this premise. We find his premise is deeply related to the property of $\alpha$-function at the boundary of flat. In fact, the purpose of his premise is to make sure the existence of new measure at the boundary of flat. Our construction proves the existence of Hamiltonian system with new measure coming out at the boundary of flat in arbitrary degrees of freedom case. Moreover, in the case of two degrees of freedom, Hamiltonian systems with this property is stable under small perturbations of potential functions.

In this paper we can construct the following example:
\begin{equation}\label{1.1}
L(x,\dot{x})=\langle
\dot{x},\dot{x}\rangle_g+u_1(x_n)+u_2(x_n)\sum_{i=1}^{n-1}(1-\cos
x_i),
\end{equation}
here the precise form of the Riemannian metric $g$ and the potential function will be given in the later section. Then we get our main result as following:

\begin{The}\label{main theorem}
In the case of $n$ degrees of freedom, there exist $u_1$,$u_2$ and Riemannian metric $g$ such that the $\alpha$ function of (\ref{1.1}) has a flat $\mathbb{F}$ of full dimension at the lowest energy level. There are  $2^{n-1}$ points $c_i(i=1,2,\cdots,2^{n-1})\in\partial\mathbb{F}$ of which the Mather set $\mathcal{M}(c_i)$ supports at $n-1$ ergodic minimizing measures except the
one supported on the minimal fixed point. Besides, all of the $n-1$ new minimizing measures have non-vanishing rotation vectors different from each other.
\end{The}
%\begin{Rem}
%By modifying the Remannian metric $g$ we can make all the new minimizing measures ergodic. Then we actually get $n-1$ ergodic minimizing measures besides the one supported on the fixed point.
%\end{Rem}
\begin{Rem}
We can see that in the following section this phenomenon also happens at a higher energy level flat.
\end{Rem}
\begin{defn}
  We say a property of $L$ is stable in the sense of Ma\~{n}\'{e} if there exists a small enough neighborhood $\mathcal{O}$ of zero in $C^{\infty}(M,\mathbb{R})$ such that the property holds for all the Tonelli Lagrangian $L-u,u\in\mathcal{O}$.
\end{defn}
\begin{The}\label{two dimensions}
 In the case of two degrees of freedom, the properties in Theorem(\ref{main theorem}) is stable in the sense of Ma\~{n}\'{e}.
\end{The}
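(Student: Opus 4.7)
My approach is to exploit the hyperbolicity of the Mather-minimizing objects in the two-dimensional case and invoke their structural stability under $C^2$-small Ma\~n\'e perturbations. Write $L$ for the Lagrangian of Theorem \ref{main theorem} with $n=2$. I will first argue from the construction that the Mather measures relevant near $\mathbb{F}$ are exhausted by three objects: the Dirac mass $\mu_0=\delta_{p_0}$ on a hyperbolic fixed point $p_0$ (the unique Mather measure over $\mathrm{int}\,\mathbb{F}$) and, at each boundary point $c_i\in\partial\mathbb{F}$ ($i=1,2$), an additional ergodic measure $\mu_i$ supported on a (normally) hyperbolic closed orbit $\Gamma_i$ with rotation vector $h_i\neq 0$, $h_1\neq h_2$. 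Hyperbolicity of $p_0$ follows from the non-degenerate minimum of the potential term of $L$; hyperbolicity of $\Gamma_i$ will be extracted from the construction itself, since $\Gamma_i$ is a closed orbit of a one-degree-of-freedom subsystem in a pendulum-type invariant $2$-surface whose normal direction has a saddle-type linearization.

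\textbf{Step 1: persistence of dynamical objects.} For $u$ in a small $C^2$-neighborhood of $0$ in $C^\infty(\mathbb{T}^2,\mathbb{R})$, the implicit function theorem produces a unique hyperbolic fixed point $p_0^u$ of $L-u$ close to $p_0$, and the structural-stability theorem for normally hyperbolic closed orbits produces closed orbits $\Gamma_i^u$ of $L-u$ near $\Gamma_i$. Both depend continuously on $u$, as do their canonical ergodic invariant measures $\mu_0^u,\mu_1^u,\mu_2^u$ (in the weak-$*$ topology) and their rotation vectors $0,h_1^u,h_2^u$, which stay mutually distinct and non-zero for $u$ sufficiently small.

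\textbf{Step 2: control of the perturbed $\alpha$-function.} The critical analytic input is a uniform action gap $\Delta>0$: every ergodic $\Phi^t$-invariant measure $\nu$ of $L$ whose support avoids a prescribed neighborhood of $\{p_0\}\cup\Gamma_1\cup\Gamma_2$ satisfies $\int L\, d\nu-\int L\, d\mu_0\geq\Delta$. By the upper semi-continuity of $\widetilde{\mathcal{N}}$ in both $c$ and $L$ (Theorem \ref{Alpha}), this gap survives small perturbations in $u$ and $c$, so for $u$ small and $c$ in a neighborhood of $\mathbb{F}$ the Mather set of $L-u$ at $c$ is contained in $\{p_0^u\}\cup\Gamma_1^u\cup\Gamma_2^u$. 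Consequently the $\alpha$-function $\alpha_u$ of $L-u$ coincides locally with the upper envelope of three affine functions,
\[
\alpha_u(c)=\max_{j\in\{0,1,2\}}\Bigl(\langle c,\rho(\mu_j^u)\rangle-\int(L-u)\,d\mu_j^u\Bigr).
\]
Since $\rho(\mu_0^u)=0$ and $h_1^u,h_2^u$ are non-zero and distinct, this envelope exhibits a full-dimensional flat $\mathbb{F}^u$ on which $\mu_0^u$ alone realizes the maximum, bordered by two facets $c_1^u,c_2^u\in\partial\mathbb{F}^u$ at which $\mu_i^u$ ties $\mu_0^u$. At each $c_i^u$ the subdifferential $D^{-}\alpha_u(c_i^u)$ contains the two distinct rotation vectors $0$ and $h_i^u$, which yields the non-differentiability.

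\textbf{Main obstacle.} The delicate step is establishing the action gap $\Delta$ and ensuring that the exhaustive list of candidate minimizers is preserved under perturbation. The two-dimensional hypothesis is essential here: the Aubry set lies in a $3$-dimensional energy surface on which the Lipschitz graph property (Theorem \ref{Alpha}) forces minimizing orbits into a codimension-one locus, and the explicit form of $L$ then permits a direct action-comparison argument to exclude spurious minimizers both in the unperturbed system and after small $C^2$ perturbations.
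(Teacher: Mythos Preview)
Your approach rests on normal hyperbolicity of the closed orbits $\Gamma_i$ in channel $B$, but this is false for the construction in the paper. With $n=2$ the restricted Lagrangian in channel $B$ is $L_B=\tfrac14\dot{x}_1^2+\tfrac18\dot{x}_2^2+\delta$, a flat geodesic system with constant potential; every circle $\{x_2=\text{const}\}$ inside channel $B$ is a minimizing periodic orbit for the relevant cohomology class, and the normal ($x_2$) linearization vanishes identically. There is no isolated $\Gamma_i$ to which the structural stability theorem applies, no uniform action gap $\Delta$ isolating a finite list of candidate minimizers, and under a Ma\~n\'e perturbation the channel-$B$ Mather set may reorganize completely. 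The same degeneracy invalidates your envelope formula in Step~2: since the channel-$B$ minimizer varies with $c$, the function $\alpha_B$ is strictly convex (quadratic) rather than affine, so $\alpha_u$ is not a maximum of three affine functions even in the unperturbed system.

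The paper's proof sidesteps this by never tracking an individual channel-$B$ orbit. Lemma~\ref{lemma 2.1} already establishes that the identity $\alpha_{L'}(c)=\max\{\alpha_{L',A},\alpha_{L',B}\}$ on $\mathcal{H}$ is stable under Ma\~n\'e perturbation; continuity of $\alpha$ in the Lagrangian then preserves the transversal intersection of the flat piece of $\alpha_{L',A}$ with the strictly convex $\alpha_{L',B}$, and with it the non-differentiability at the intersection point $a'$. The substantive remaining step, which your outline does not address, is to show that the channel-$A$ minimizing measure at $a'$ still has rotation vector zero after perturbation, so that the flat genuinely persists out to $a'$. The paper argues this by contradiction: assuming a nonzero rotation vector, it locates a homoclinic or periodic orbit in $\widetilde{\mathcal{A}}(a'')$ for some $a''\in[0,a']$ and bounds its action from below by an explicit quantity exceeding $a'$, contradicting the static condition. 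No hyperbolicity of channel-$B$ objects enters anywhere.
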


This two degrees of freedom phenomenon can be applied to a-priori stable Arnold Diffusion problem\cite{Cheng C-Q}. Recently, Cheng proved the existence of normally hyperbolic invariant cylinder near double resonance frequency (\cite{Cheng C-Q} sec.5.2), which can be divided into two different cases. Actually, our example just satisfies the case that normally hyperbolic invariant cylinder can reach the lowest energy level with a periodic orbit as the bottom. We also know this case is stable of Ma\~{n}\'{e}'s sense (Theorem\ref{two dimensions}).

Similar construction can be found in \cite{Bangert V} from a geometrical viewpoint. Here we use a variantional method which is known as Aubry Mather Theory nowadays and verify the existence of $\alpha$ function's flat. Besides, we supply Zheng Yong's paper and get infinitely many homoclinic orbits. So our construction has extra significance in dynamic systems other than geometry.

This paper is outlined as follows. In Section $2$, we give the construction of the examples of different cases. We give the proof of stability in the case of two degrees of freedom and some remarks in Section $3$.

%\subsection{Mather Theory}
%
%Mather defined the minimizing measure $\mu$ as a probability measure which is supported on the orbits of Euler-Lagrangian equation and satisfies $\int L d\mu=\min_{\nu}\int Ld\nu$. So we can define $\alpha(c)$ and its dual $\beta(h)$ function as:
%\[
%\alpha: H^1(M,\mathbb{R})\rightarrow\mathbb{R},\quad \alpha(c)=-\min_{\mu}\int_{TM}L-c d\mu.
%\]
%Here $\mu$ ranges from the set of invariant probability measures.
%
%\[
%\beta: H_1(M,\mathbb{R})\rightarrow\mathbb{R}, \quad \beta(h)=\min_{\rho(\mu)=h}\int_{TM}L d\mu.
%\]
%
%
%The two functions are both continuous, convex and superlinear. In fact, they are conjugated with each other in the sense of Legendre transformation \cite{Mather1991},\cite{Rockafellar}. From basic convex analysis knowledge we know that if there exists some point on the boundary of the flat at which $\alpha(c)$ is not differentiable, then there must be extra minimizing
%measures with non vanishing rotation vectors coming out. Hence, we just need to construct an example whose $\alpha$ function is not differentiable at some point of the boundary of the flat.
\section{ Construction of the example}
Now we construct the example in details and verify the properties satisfied in Theorem \ref{main theorem}.
\subsection{the lowest energy level case}

We construct the Lagrangian as follows.
\[
L(x,\dot{x})=\langle
\dot{x},\dot{x}\rangle_g+u_1(x_n)+u_2(x_n)\sum_{i=1}^{n-1}(1-\cos
x_i)
\]
where $(x,\dot{x})\in \mathbb{T}^n\times\mathbb{R}^n$ and $x=(x_1,x_2,\ldots,x_n)$. $u_1(x_n)$, $u_2(x_n)$ and the Riemannian metric $g$ are constructed as follows.

First, we mark several channels on the $\mathbb{T}^n$ along the $n$-th coordinate $x_n$.
\begin{enumerate}
  \item channel $A$: $\mathbb{T}^{n-1}\times[-\frac{\pi}{4n},\frac{\pi}{4n}],\;\mathbb{T}=\mathbb{R}/2\pi$,
  \item channel $B_i$: $\mathbb{T}^{n-1}\times[\frac{2\pi i}{n}-\frac{\pi}{4n},\frac{2\pi i}{n}+\frac{\pi}{4n}],\;i=1,2,\cdots n-1$,
  \item channel $C_j$: $\mathbb{T}^{n-1}\times[\frac{(2j-1)\pi}{n}-\frac{\pi}{4n},\frac{(2j-1)\pi}{n}+\frac{\pi}{4n}],\;j=1,2,\cdots n$.
\end{enumerate}
Obviously, channel $B_i$ and $C_j$ are just constructed by shifting channel $A$ along the $x_n$ coordinate. It is easy to see that $A$ and $B_i$ are separated by $C_j$.
\begin{equation}
u_1(x_n)=\begin{cases}
0,\quad x_n\ \text{in channel}\ A,\\
K\gg 1,\quad x_n\ \text{in channel}\ C_j\ \text{and}\ j=1,2,\cdots n,\\
0<\delta_i\ll K,\quad x_n\  \text{in channel}\ B_i\ \text{and}\ i=1,2,\cdots n-1,\\
\end{cases}
\end{equation}
where the values of $\delta_i$ will be given later (in fact it is enough to take $0<\delta_i\leq\frac{1}{2}$).
\begin{equation}
u_2(x_n)=\begin{cases}
1,\quad x_n\ \text{in channel}\ A,\\
0,\quad x_n\ \text{out of some small neighborhood of channel A}.\\
\end{cases}
\end{equation}
Both $u_1$ and $u_2$ can be smoothly extended to the whole space $\mathbb{T}^n$. Later you will see that we needn't give their precise evaluation because only qualitative proof is cared.

At last, we construct the Riemannian metric
$\langle\dot{x},\dot{x}\rangle_g=\sum_{i=1}^{n}a_i(x)^2\dot{x}_i^2$ as follows:
\begin{equation*}
G
 =\left(\begin{array}{cccc}
 a_1^2(x)&0&\cdots&0\\
 0&a_2^2(x)&0&0\\
 0&0&\ddots&0\\
0&0&0&a_n^2(x)
\end{array}\right)_{n\times n}
\end{equation*}
is a diagonal metric matrix.
\begin{enumerate}
  \item $G=\mathbf{Id}_{n\times n}$, for $x_n$ in channel $A$,
  \item \begin{equation*}
             G=\left(\begin{array}{cccc}
               K&0&\cdots&0\\
               0&K&0&0\\
               0&0&\ddots&0\\
                0&0&0&K
               \end{array}\right)_{n\times n}
               \end{equation*} for $x_n$ in channel $C_j(j=1,2,\ldots,n)$,
  \item \begin{equation*}
          G=\left(\begin{array}{cccccccc}
            \frac{1}{8}&0&\cdots&\cdots&\cdots&\cdots&\cdots&0\\
            0&\frac{1}{8}&0&\cdots&\cdots&\cdots&\cdots&0\\
            0&0&\ddots&0&\cdots&\cdots&\cdots&0\\
            0&\cdots&0&\frac{1}{8}&0&\cdots&\cdots&0\\
            0&\cdots&\cdots&0&\frac{1}{4}&0&\cdots&0\\
            0&\cdots&\cdots&\cdots&0&\frac{1}{8}&0&0\\
            0&\cdots&\cdots&\cdots&\cdots&0&\ddots&0\\
            0&\cdots&\cdots&\cdots&\cdots&\cdots&0&\frac{1}{8}
            \end{array}\right)_{n\times n}
            \end{equation*} for $x_n$ in channel $B_k(k=1,2,\ldots,n-1)$, where $\frac{1}{4}$ is in the $k$-th row and $k$-th column.
\end{enumerate}

We take a subspace of $H^1(\mathbb{T}^{n},\mathbb{R})$ with the n-th coordinate $c_n=0$, which is denoted by $\mathcal{H}$. We also denote the restriction of Lagrangian L in the channels $A$, $B_i$ and $C_j$ by $L_A$, $L_{B_i}$ and $L_{C_j}$, $\alpha_A(c)$, $\alpha_{B_i}(c)$ and $\alpha_{C_j}(c)$ as the restricted $\alpha$ function. It's remarkable that $\alpha_A(c)$, $\alpha_{B_i}(c)$ and $\alpha_{C_j}(c)$ are well defined for all the $c\in H^1(\mathbb{T}^{n},\mathbb{R})$ but only $c\in\mathcal{H}$ we care about.

Moreover, we could calculate $\alpha_A(c)$, $\alpha_{B_i}(c)$ and $\alpha_{C_j}(c)$ when $c\in\mathcal{H}$.
\begin{align*}
\alpha_A(c)&=-\min_{\mu\in\mathcal{M}_{inv}}\int L_A-c d\mu,\\
&=-\min_{\mu\in\mathcal{M}_{inv}}\int \langle v,v\rangle_g+\sum_{i=1}^{n-1}(1-\cos x_i)-\sum_{i=1}^{n-1}c_iv_i d\mu,\\
%&=-\min_{\mu\in\mathcal{M}_{inv}\atop\pi_{n}\rho(\mu)=0}\int\sum_{i=1}^{n-1}(v_i^2+(1-\cos x_i)-c_iv_i) d\mu,\\
&=-\min_{\mu\in\mathcal{M}_{inv}\atop\pi_{n}\rho(\mu)=0}\sum_{i=1}^{n-1}\int(v_i^2+(1-\cos x_i)-c_iv_i) d\mu.\\
\end{align*}
It is easy to see that $L_A$ consists of $n-1$ independent pendulum systems. Hence, $\alpha_A$ has a flat  $[-\frac{4\sqrt{2}}{\pi},\frac{4\sqrt{2}}{\pi}]^{n-1}\times\{0\}\subseteq \mathcal{H}$.
\begin{align*}
\alpha_{B_i}(c)&=-\min_{\mu\in\mathcal{M}_{inv}}\int L_{B_i}-c d\mu\\
&=-\min_{\mu\in\mathcal{M}_{inv}}\int \langle v,v\rangle_g+\delta_i-\sum_{i=1}^{n-1}c_iv_i d\mu\\
&=-\delta_i+\sum_{k=1}^{n-1}16c_k^2-12c_i^2,\quad i=1,2,\cdots,n-1.\\
\end{align*}
We get the last equality because this is a geodesic integral system. If we take $0<\delta_i<\frac{1}{2}$, the level set $\alpha_{B_i}^{-1}(0)\ \subseteq [-\frac{4\sqrt{2}}{\pi},\frac{4\sqrt{2}}{\pi}]^{n-1}\times\{0\}\subseteq \mathcal{H}$.

The following lemma gives the relationship between $\alpha$ function on $\T^n$ and $\alpha$ function restricted to channel $A$ and $B_i$.

\begin{Lem}\label{lemma 2.1}
\[\alpha_L(c)=\max\limits_{i=1,2,\cdots,n-1}\{\alpha_A,\alpha_{B_i}\},\]where
$c=(c_1,c_2,\cdots,c_{n-1},0)\in\mathcal{H}$. This result is stable in the sense of Ma\~{n}\'{e}.
\end{Lem}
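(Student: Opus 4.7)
The plan is to prove the two inequalities
\[
\alpha_L(c)\;\geq\;\max_i\{\alpha_A(c),\alpha_{B_i}(c)\},\qquad\alpha_L(c)\;\leq\;\max_i\{\alpha_A(c),\alpha_{B_i}(c)\}
\]
separately: the first via measure testers and the second via a global Lipschitz subsolution. The first direction is the easier one. For each channel $X\in\{A,B_1,\ldots,B_{n-1}\}$ I would select a $c$-minimizing measure $\mu_X$ of $L_X$ whose support lies physically inside the strip $X\subset\mathbb{T}^n$. For $X=A$ and $c=(c_1,\ldots,c_{n-1},0)\in\mathcal{H}$, the decoupled pendulum minimizers paired with $\dot x_n\equiv 0$ at $x_n=0$ do the job; for $X=B_k$ one picks the uniform measure on a free geodesic at $x_n=2k\pi/n$. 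Because $L\equiv L_X$ on $X$, each $\mu_X$ is automatically $L$-invariant with the same integral as under $L_X$, and $\alpha_L(c)\geq\alpha_X(c)$ follows directly from the definition of the $\alpha$-function.

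For the reverse inequality, set $k_{\ast}:=\max_i\{\alpha_A(c),\alpha_{B_i}(c)\}$ and construct a Lipschitz function $w\colon\mathbb{T}^n\to\mathbb{R}$ with $H_L(x,dw(x)+c)\leq k_{\ast}$ for a.e.\ $x$; the Fenchel inequality $L(x,v)-c\cdot v+k_{\ast}\geq dw(x)\cdot v$ integrated along any closed absolutely continuous curve then yields $\alpha_L(c)\leq k_{\ast}$ via the standard Ma\~{n}\'{e} characterization of $\alpha$. The building blocks are a weak KAM subsolution $w_A$ for $L_A$ realising $H_{L_A}\leq\alpha_A(c)$, analogous $w_{B_k}$ for each $L_{B_k}$, and the constant function $0$ on the cores of each $C_j$, where $H_L(x,c)=|c|^2_{g^{\ast}}/4-K\ll0$ once $K$ is taken large enough. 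These are assembled on $\mathbb{T}^n$ by cutoffs in the $x_n$-variable with additive constants fitted for continuity.

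The main obstacle is the behaviour of the glued $w$ on the transition strips between $A$ (resp.\ $B_k$) and the adjacent $C_j$, because the smooth interpolation of $u_1$, $u_2$ and $g$ there is not pinned down by the construction. I would arrange the interpolation so that $u_1$ rises from its channel value up to $K/2$ on a very thin sublayer along the boundary of $A$ (resp.\ $B_k$): inside this sublayer the cutoff on $w$ is kept off so $w\equiv w_A$ (resp.\ $w_{B_k}$) and $H_L$ there is essentially $H_{L_X}\leq\alpha_X\leq k_{\ast}$; outside it, one may freely interpolate $w$ down to a constant, because the estimate $H_L(x,dw+c)\leq|dw+c|^2/4-K/2<0\leq k_{\ast}$ holds irrespective of the cutoff derivative. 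Once this quantitative choice of interpolation is fixed, $w$ is a global Lipschitz subsolution and the upper bound follows.

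Stability in the sense of Ma\~{n}\'{e} is then essentially automatic, since the argument above is only quantitative in the coarse ``$K\gg 1$'' sense. For any $u\in C^\infty(\mathbb{T}^n,\mathbb{R})$ with $\|u\|_{C^0}$ sufficiently small, the barrier of $L-u$ on each $C_j$ still exceeds $K/2$, so the constant piece of $w$ is preserved with room to spare; the weak KAM subsolutions $w_A,w_{B_k}$ and the values $\alpha_A(c),\alpha_{B_k}(c)$ of the perturbed restricted Lagrangians vary $C^0$-continuously in $u$; and the testing measures $\mu_X$ (pendulum equilibrium, respectively uniform on a geodesic) persist after perturbation with $\int(L_X-u-c)\,d\mu_X$ close to the unperturbed value. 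Pairing them reproduces the identity for $L-u$, so the max formula is stable on a $C^0$-small neighbourhood of $0$.
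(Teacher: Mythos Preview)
Your proposal is correct in outline and takes a genuinely different route from the paper for the nontrivial half of the lemma. Both you and the paper obtain $\alpha_L(c)\ge\max\{\alpha_A,\alpha_{B_i}\}$ the same way, by testing against channel-supported invariant measures. For the reverse inequality, however, the paper does not build a subsolution at all: it invokes Ma\~{n}\'{e}'s approximation of minimizing measures by measures on closed curves and shows that for any closed curve $\gamma\subset\mathbb{T}^n$ one can project $\gamma$ into some channel $A$ or $B_i$ (freezing $x_n$) without increasing the $(L-c)$-action, the key pointwise inequality being $\int_\gamma(L-c)-\int_{\gamma^*}(L-c)\ge\int a_n^2\dot\gamma_n^2+u_1\ge 0$. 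For stability the paper again argues by curve surgery: any closed curve that leaves a channel must cross some $C_j$, which costs at least $2\sqrt{K_1K_2}\cdot\mathrm{width}(C_j)$; this dominates the $O(n)$ cost of closing the projected curve inside the channel by a short geodesic, so after a $C^r$-small potential perturbation one still finds a closed competitor in a single channel with smaller action.

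The trade-off is this. The paper's projection argument is entirely elementary---no weak KAM solutions, no Hamilton--Jacobi inequality---and is phrased as if it is insensitive to how $u_1$, $u_2$ and $g$ are interpolated between channels. Your subsolution gluing is conceptually cleaner and plugs directly into the standard characterisation $\alpha(c)=\min\{k:\exists\,w\text{ Lipschitz with }H(x,c+dw)\le k\}$, but as you correctly flag, it needs the interpolation arranged so that $u_1$ reaches order $K$ before $g$ or $u_2$ leave their channel-$A$ values; since the Lagrangian is a constructed example this is permissible, though it means you are proving the identity for a more constrained family than the paper's wording suggests. For stability both arguments rest on the same mechanism---the $K$-barrier on the $C_j$ strips overwhelms any $O(\epsilon)$ perturbation---so neither gains materially over the other there.
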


\begin{proof}First, we have $\alpha_L(c)\geq \alpha_A(c)$, and $\alpha_L(c)\geq \alpha_{B_i}(c)$. It follows since the action of the local minimizer is bigger than the global minimizer. Hence,
\[
 \alpha_L(c)\geq\max_{i=1,2,\cdots,n-1}\{\alpha_A,\alpha_{B_i}\}.
\]

On the other hand, from \cite{Mane1996} we know that every minimizing measure can be approximated by a sequence of propability measures supported on absolutely continuous closed curves which are not necessarily minimal, so we just need to certify that every closed curve $\gamma$ in $\mathbb{T}^n$ has a larger action of the Lagrangian $L-c$ than its projection $\gamma^*$ to $\mathbb{T}^{n-1}$ in some channel $A$ or $B_i$, here $c=(c_1,c_2,\cdots,c_{n-1},0)\in\mathcal{H}$. It follows from
\begin{align*}
\int_\gamma L-c^*- \int_{\gamma^*}L-c^*&\geq \int a_n^2\dot{\gamma}_n^2+u_1(x_n)\\
&\geq0.
\end{align*}
Hence,
\[
\alpha_L(c)\leq\max\limits_{i=1,2,\cdots,n-1}\{\alpha_A,\alpha_{B_i}\}.
\]

Now we prove the stability. If another Lagrangian $L'$ is $C^1$ close to $L$, i.e. $L'=L+V(x)$ and $\|V\|_r\leq\epsilon\ll\delta_i$,\;$i=1,2,\cdots,n-1$. Now the values of $u_1$ and $u_2$ in channels $A$, $B_i$ and $C_j$ will be deformed with a magnitude not bigger than $\epsilon$. Without loss of generality, we can denote these channels' $\epsilon$ neighborhoods by $A_{\epsilon}$, $B_{i,\epsilon}$ and $C_{j,\epsilon}$ and avoid the influence of deformation. For convenience we still use $A$, $B_i$ and $C_j$ without confusion.

We claim that for an arbitrary closed curve $\gamma$, we could find $\xi$ in channel A or $B_i$ closed, and has a smaller action, i.e. $\mathbb{A}_{c,L'}(\xi)\leq \mathbb{A}_{c,L'}(\gamma)$. Once this claim proved, the stability we get.

$\mathbf{Case\;1.}$ Once the closed curve $\gamma$ in channel $A$, we have $\mathbb{A}_{c,L'}(\gamma)=\mathbb{A}_{c,L'}\mid_{A}(\gamma)$.

$\mathbf{Case\;2.}$ If the $n$-th component of the  homology type of $\gamma$ is not zero, it must cross $A$ and at least one $B_{i}$, then we project the part outside of channel $A$ to channel $A$ with the $n-th$ coordinate keeping constant, which is denoted by $(\gamma\cap A^c)^*$ and:
\begin{align*}
\int_{\gamma\cap A^c} L'-c^*- &\int_{(\gamma\cap A^c)^*}L'-c^*\\
&\geq \int_{\gamma\cap A^c} K_1\dot{\gamma}_n^2+K_2\\
&\geq K_2(T_2-T_1)+\frac{K_1}{T_2-T_1}\int_{T_1}^{T_2}\dot{\gamma}_n^2 dt\int_{T_1}^{T_2}1 dt\\
&\geq K_2(T_2-T_1)+\frac{K_1}{T_2-T_1}(\int_{T_1}^{T_2}\mid\dot{\gamma}_n\mid dt)^2\\
&\geq 2\sqrt{K_1K_2}\cdot width(C_{i})\\
&>0,
\end{align*}
here $K_1$ and $K_2$ could be both positive and sufficient large. In fact, $K_1=K^2$ and $K_2=K-\epsilon\gg\frac{K}{2}$ if we take $\epsilon$ sufficiently small. The width of some channel $C_{i}$ is here because $A$ and $B_{i}$ are separated by at least one $C_{i}$.
\begin{figure}[htp]
 \centering
  \includegraphics[width=3.5cm,height=4cm]{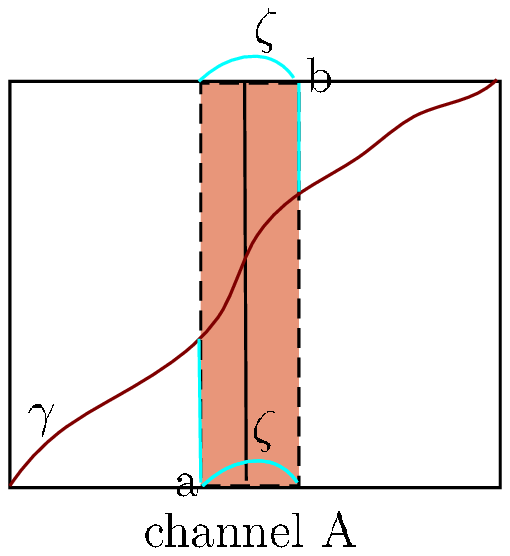}
  \caption{}
  \label{zfig5}
\end{figure}
We denote the two endpoints of $(\gamma\cap A^c)^{*}$ by $a$, $b$. Obviously these two points are both in channel $A$. Then we connect these two points with a geodesic curve $\zeta$ and $(\gamma\cap A^c)^{*}\cup\zeta\cup(\gamma\cap A)\doteq\xi$ forms a closed curve (see figure(\ref{zfig5})). But we know $dist(a,b)\leq\sqrt{n}$, and
\begin{align*}
\int_0^1 L'(\zeta,\dot{\zeta})-\langle c^{*},\dot{\zeta}\rangle dt&\leq\epsilon+1+\int_0^1|\dot{\zeta}|^2dt\\
\leq n+1+\epsilon.
\end{align*}

%Here we could always choose a proper $\zeta$ satisfying $\int_0^1\langle c^*, \dot{\zeta}\rangle\geq 0$ because $\mathbb{T}^n$ is a closed manifold.
Then we could take sufficient large $K$ such that $n+1+\epsilon\leq 2\sqrt{K_1K_2}\cdot width(C_i)$. So $\xi$ has smaller action than $\gamma$.

%With the help of this, we can directly project $\gamma\cap A^c$ to channel A with the $n-th$ coordinate keeping constant and we needn't extra curve to make it closed. Obviously this curve in $A$ has a smaller action value.So if $\mu$ is the minimizing measure of $L$ at $C^{*}$ and $\gamma$ is the orbit in its support, we must have either $\gamma\subset A$ or $\gamma\subset A^c$.

With a similar approach we can also project $\gamma$ into each channel $B_i$ with the $n$-th coordinate constant and make it closed. Since all the possible cases are finitely many, we can always take a sufficient large $K$ to make all these cases satisfy the claim. Therefore, the lemma is proved.
\end{proof}

\begin{Rem}
The stability property under perturbation in this lemma will be used in the next section.
\end{Rem}

Now, we have constructed our first example of $n$ degrees of freedom. $\alpha_L^{-1}(0)$ is a lowest flat of $n-1$ dimensions restricted to $\mathcal{H}$. This is because $\alpha_{B_i}^{-1}(0)\subseteq[-\frac{4\sqrt{2}}{\pi},\frac{4\sqrt{2}}{\pi}]^{n-1}\times\{0\}\subseteq \mathcal{H}$. At the boundary of this flat, we could find $2^{n-2}$ pairs of points which are diametrical with each other. We can see that at these points $\alpha_L(c)$ is not differentiable along at least $n-1$ different directions, since $\alpha_{B_i}(c)$ intersects the flat $[-\frac{4\sqrt{2}}{\pi},\frac{4\sqrt{2}}{\pi}]^{n-1}\times\{0\}$ transversally and $\alpha_L(c)=\max_{i=1,2,\cdots,n-1}\{\alpha_A,\alpha_{B_i}\}$.
More precisely, the coordinates of these points in $\mathcal{H}$ are easy to write down when we take $\delta_i\equiv\frac{1}{2}$:
\[
\{(c_1,c_2,\cdots,c_{n-1},0)\in H^1(\mathbb{T}^n,\mathbb{R})\mid c_i=\pm\frac{1}{\sqrt{8(4n-7)}}\}
\]
At each point, there are $n-1$ new measures coming out except the one supported on the fix point in channel $A$. Also we can see that the extra $n-1$ new measures are all Lagrangian invariant torus consist of periodic orbits, but we can change the Riemannian metric in the channel $B_i$ to make all the new measures into quasi-periodic KAM torus. What we just need to do is change the metric vector $(a_1,a_2,\cdots,a_n)$ to non-resonant one.

At last, we show that the lowest level flat $\mathbb{F}$ is indeed of full dimension. That's because
\[
\widetilde{\mathcal{M}}(0)=\widetilde{\mathcal{N}}(0),
\]
and we also knows that $\widetilde{\mathcal{M}}(0)$ just consists of the fixed points (simple calculation of a pendulum system). So we could make use of the upper semi-continuous property of the $\widetilde{\mathcal{N}}(c)$ and expand the flat along the directions of $H_1(M,\mathcal{N}(0),\mathbb{R})$.
\begin{Lem}
If $c\in\mathcal{H}^{\perp}\subseteq H_1(\mathbb{T}^n,\mathbb{R})$, there exists a sufficient small  $ \lambda\ll1$, such that there exists a flat $[-\lambda c,\lambda c]$ along the direction of $c$.
\end{Lem}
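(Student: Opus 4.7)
Under the canonical identification $H_1(\mathbb{T}^n,\mathbb{R})\simeq H^1(\mathbb{T}^n,\mathbb{R})$ via the standard basis, $\mathcal{H}^{\perp}$ is the one-dimensional line spanned by $e_n$. Writing the element of the lemma as $c=s\,e_n$ with $s\neq 0$, I need to find $\lambda_0>0$ with $\alpha_L(t\,e_n)=\alpha_L(0)=0$ for every $t\in[-\lambda_0|s|,\lambda_0|s|]$. The inequality $\alpha_L(t\,e_n)\ge 0$ is immediate: the origin $(0,0)\in T\mathbb{T}^n$ sits in channel $A$ and is a critical point of $L$, hence a fixed point of the E--L flow with $L(0,0)=0$; the Dirac $\delta_{(0,0)}$ is $\Phi^t$-invariant with rotation vector zero, so $\int (L-t\,dx_n)\,d\delta_{(0,0)}=0$, giving the bound.

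The substantive content is the matching inequality $\alpha_L(t\,e_n)\le 0$, i.e.\ that for every $\nu\in\mathcal{M}_{\mathrm{inv}}$
\[
\int L\,d\nu\;-\;t\,\rho_n(\nu)\;\ge\;0\qquad\text{whenever }|t|\le K/2.
\]
If $\rho_n(\nu)=0$ this is free since $L\ge 0$. If $\rho_n(\nu)\neq 0$, Birkhoff's ergodic theorem on the ergodic components of $\nu$ gives, for $\nu$-a.e.\ orbit, $x_n(T)-x_n(0)=\rho_n(\nu)\,T+o(T)$; consequently its lift crosses at least $(n/(2\pi))|\rho_n(\nu)|\,T - O(1)$ of the wall channels $C_j$ during $[0,T]$. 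Each crossing, by the very Cauchy--Schwarz estimate used in Lemma~\ref{lemma 2.1} (now with $K_1=K_2=K$ and channel width $\pi/(2n)$, whose optimum is $2\sqrt{K\cdot K}\cdot\pi/(2n)=K\pi/n$), contributes at least $K\pi/n$ to $\int L\,dt$. Averaging and letting $T\to\infty$ yields $\int L\,d\nu\ge (K/2)|\rho_n(\nu)|\ge |t|\,|\rho_n(\nu)|\ge t\,\rho_n(\nu)$, as desired.

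Combining the two bounds, $\alpha_L(t\,e_n)=0$ for all $|t|\le K/2$. Since $K\gg 1$, any $\lambda_0>0$ with $\lambda_0|s|\le K/2$ works; in particular $\lambda_0\ll 1$ is permissible, and then $[-\lambda_0 c,\lambda_0 c]\subset\alpha_L^{-1}(0)$, which is the claim. The main technical worry is the crossing count: ergodic decomposition of $\nu$ reduces to the ergodic case, and counting crossings with multiplicity (not net signed count) protects against orbits that oscillate back and forth across a single $C_j$. The quantitative gap $\int L\,d\nu\ge (K/2)|\rho_n(\nu)|$ is also the content behind the author's outline invoking ``$\widetilde{\mathcal{M}}(0)=\widetilde{\mathcal{N}}(0)$ plus upper semi-continuity of $\widetilde{\mathcal{N}}(\cdot)$'': it forces every $t\,e_n$-minimizing measure in this range to have $\rho_n=0$, and hence by the Lipschitz-graph/Aubry structure to be supported on the fixed-point part of $\widetilde{\mathcal{M}}(0)$, which is precisely what is required for the flat to extend along the $\mathcal{H}^{\perp}$ direction.
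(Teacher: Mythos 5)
Your proof is correct, but it takes a genuinely different route from the paper's. The paper argues softly: since $\mathcal{M}(0)=\mathcal{A}(0)=\mathcal{N}(0)$ is contained in channel $A$, upper semi-continuity of $c\mapsto\widetilde{\mathcal{N}}(c)$ forces $\mathcal{N}(\lambda c)$ (hence $\mathrm{supp}\,\mu_{\lambda c}$) to stay in channel $A$ for small $\lambda$, so $\langle\lambda c,\rho(\mu_{\lambda c})\rangle=0$ and $-\alpha(\lambda c)=\int L\,d\mu_{\lambda c}\ge -\alpha(0)$; convexity of $\alpha$ together with $\alpha(0)$ being the minimum then forces equality. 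That argument is two lines but only yields an unquantified ``sufficiently small $\lambda$.'' You instead bypass the Ma\~{n}\'{e}-set machinery entirely and prove the dual estimate $\int L\,d\nu\ge\frac{K}{2}|\rho_n(\nu)|$ for every invariant measure, via ergodic decomposition, a topological crossing count of the wall channels $C_j$, and the same Cauchy--Schwarz/AM--GM estimate already used in Lemma~\ref{lemma 2.1}; this gives the explicit flat $\{|t|\le K/2\}$ in the $e_n$ direction, which is strictly more than the lemma asserts (and, as you note, explains \emph{why} every $te_n$-minimizing measure must have $\rho_n=0$, which is the fact the soft argument extracts from semi-continuity). The one point to state carefully in a written version is the reduction step: for a non-ergodic $\nu$ you need $\int L\,d\nu=\int\bigl(\int L\,d\nu_x\bigr)d\nu\ge\frac{K}{2}\int|\rho_n(\nu_x)|\,d\nu\ge\frac{K}{2}|\rho_n(\nu)|$, using the triangle inequality on the ergodic decomposition of the rotation vector; and the crossing count should be phrased as ``any lift achieving net displacement $D$ in $x_n$ must make at least $\lfloor nD/2\pi\rfloor-O(1)$ complete traversals of wall channels,'' with partial incursions discarded since $L\ge0$. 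With those two sentences added, your argument is complete and self-contained.
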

\begin{proof}
By the upper semi-continuity of set-valued function $c\longrightarrow\widetilde{\mathcal{N}}(c)$, we know for sufficiently small $\lambda$, $\mathcal{N}(\lambda c)$ is still in channel A because $\mathcal{M}(0)=\mathcal{A}(0)=\mathcal{N}(0)=\{0\}\in\mathbb{R}^n$. Then:
\[
-\alpha(\lambda c)=\int L-\lambda cd\mu_{\lambda c}=\int Ld\mu_{\lambda c}-\langle \lambda c, [\mu_{\lambda c}]\rangle=\int Ld\mu_{\lambda c}\geq\int Ld\mu_{0}=-\alpha(0),
\]
so we have $\alpha(\pm \lambda c)\leq\alpha(0)$, where the $-\lambda c$ case is the same with $\lambda c$. We get $\alpha(\pm\lambda c)=\alpha(0)$ because of the convexity, and then $\alpha_L(c)$ has a full dimensional flat at the lowest energy level.
\end{proof}
We also recall that $\forall h\in\mathbb{R}$ the sublevel set $\{c| \alpha(c)\leq h\}$ is a convex set. Using the previous Lemma we get a full dimensional flat at the lowest level.
\subsection{the higher energy level case}

Now, we show that the same phenomenon can happen at a higher energy level.

For the sake of simplicity, we set $n=3$ and it's easy to generalize the example to a higher dimensional case$(n\geq3)$. We could take the Lagrangian as:
\[
L(x,\dot{x})=\frac{1}{2}a_1(x)(\dot{x}_1-1)^2+\frac{1}{2}a_2(x)\dot{x}_2^2+\frac{1}{2}a_3(x)\dot{x}_3^2+u_1(x_3)+u_2(x_3)(1-cos x_2)
\]
We just need to set two C-type channels , one A-type channel and one B-type channel. The settings of $u_1$ and $u_2$ is the same as the previous section. Since we still restrict the cohomology to $\mathcal{H}$, so we have $\dot{x}_3\equiv0$ and:
\[
L_{C_i}\geq \frac{1}{2}(\dot{x}_1-1)^2+{K},\quad i=1,2,
\]
\[
L_B=\frac{1}{2}(\dot{x}_1-1)^2+\dot{x}_2^2+\delta,
\]
and
\[
L_A=\frac{1}{2}(\dot{x}_1-1)^2+\frac{1}{2}\dot{x}_2^2+(1-\cos x_2).
\]
Then Lemma \ref{lemma 2.1} is still valid:

\[\alpha_L(c)\big|_{c_3=0}=\max\limits\{\alpha_A,\alpha_{B}\}.\]
Based on our calculation and $c_3=0$, the set $\{\alpha_A=0\}$ is $\{0\}\times[-\frac{4\sqrt{2}}{\pi},\frac{4\sqrt{2}}{\pi}]\times\{0\}$, which is a flat of one dimension. And $\{\alpha_B=0\}$ is an elliptical curve with an expression as
\[
\frac{1}{2}(c_1+1)^2+\frac{1}{4}c_2^2=\frac{1}{2}+\delta.
\]
Restrict to $c_1=0$ and we find that the elliptical curve go across $\{\alpha_A=0\}$ from inner with the intersection points' coordinates $(0,\pm 2\sqrt{\delta},0)$. In order to show that $\alpha_L$ is not differentiable at these two points (in fact we just need show that for one point), we need to calculate the directional derivative along $\overrightarrow{e}_2$.
\[
\partial_{\vec{{e}}_2}^+\alpha_L(0,2\sqrt{\delta},0)=\partial_{\vec{e}_2}\alpha_B(0,2\sqrt{\delta},0)
=\sqrt{\delta},
\]
and on the other hand
\[
\partial_{\vec{e}_2}^-\alpha_L(0,2\sqrt{\delta},0)=\partial_{\vec{e}_2}\alpha_A(0,2\sqrt{\delta},0)
=0.
\]
So $\alpha_L(0,2\sqrt{\delta},0)$ is not differentiable along the direction $\vec{e}_2$, which means at this point $(0,2\sqrt{\delta},0)$, there exists an extra minimizing measure of a different homology besides the one supported on closed curve.

\begin{Rem}

With the upper semi-continuous property of the $\widetilde{\mathcal{N}}(c)$, we can see that in the case of three degrees of freedom, the flat is in fact of two dimensions. The following graph shows the exact situation of the flat.
\end{Rem}
\begin{figure}[htp]
 \centering
  \includegraphics[width=5cm,height=5cm]{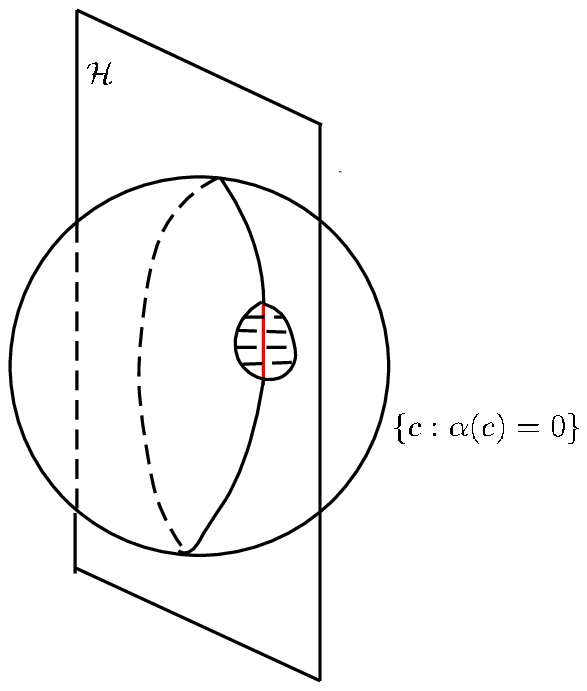}
  \caption{}
  \label{zfig3}
\end{figure}
\section{\sc Stability of the two degrees of freedom case}
In this section we prove Theorem \ref{two dimensions} in the case of two degrees of freedom.
\begin{proof}
We take $\epsilon\ll\tau^2\ll\delta\ll1$, and shrink the width of channel $A$ and $B$ to the order of $O(\epsilon)$ for the sake of simplity, and $\tau$ is used to control the value of potential function perturbation. Later we will evaluate them precisely.

A neighborhood of $0$ in $C^r(M,\mathbb{R})$ with a radius $\epsilon$ is denoted by $B(0,\epsilon)$. Any perturbation function $V(x)$ in it can't break the intersection property of $\alpha_A$ and $\alpha_B$ but just deform them of a $\epsilon$ order change (see the following graphs). This is because the continuity of $\alpha$-function with respect to Lagrangian function\cite{Zheng}.
\begin{figure}[htp]
 \centering
  \includegraphics[width=12cm,height=2.7cm]{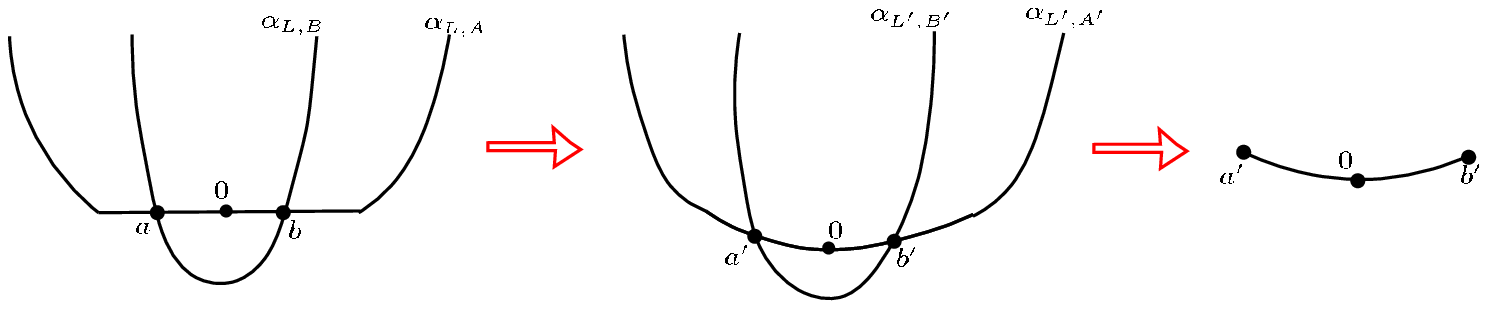}
  \caption{}
  \label{zfig2}
\end{figure}

So in a small neighborhood of point $a$, there is still an intersection point $a'$ of $\alpha_{L',A}$ and $\alpha_{L',B}$, here $L'=L+V$. We just need to deal with $a'$ point and get the same conclusion of $b'$, this is because the Hamiltonian $H'$ conjugated to $L'$ is still a quadratic mechanical system with $\alpha$ function axial symmetry. As is known to us, the Ma\~{n}\'{e} critical value is equal to $\alpha'_{L',A}(0)$, which is also the minimizing value of $\alpha'_{L',A}$ function\cite{Carneiro1995}. Without lose of generality, we suppose this value is still $0$. If not, we can add a constant to $L+V$ without influencing the intersection property of restricted $\alpha$ functions.

Still restricted to $\mathcal{H}$, the curve $\widehat{a'o}$ may be no longer a straight line, we will consider two different cases separately in the following and prove the stability. From Lemma \ref{lemma 2.1}, we know $\widetilde{\mathcal{M}}(a')$ have at least two ergodic minimizing measures, one in channel $A$ and the other in channel $B$. We denote the one in channel A by $\mu_{a',A}$ and the other $\mu_{a',B}$. From the construction of channel A, the support of this measure can only be a periodic orbit with a rotation vector $(h_1, 0)$ because of the restriction of homology.

$\mathbf{Case\;1.}$ If $h_1=0$, we could see that the curve $\widehat{a'o}$ is in fact a straight line.
%and from \ref{Alpha} we know $\widetilde{\mathcal{M}}(0)\subseteq\widetilde{\mathcal{M}}(a')$.
This is because
\begin{align*}
  \alpha(a')&=-\int L-a' d\mu_{a',A}=-\int Ld\mu_{a',A}\\
  &\leq\int\alpha(0),
\end{align*}
and $\alpha(0)$ is the minimal value.
%So we can see that the minimal measure $\mu_{a',A}$ of $a'$ is also a minimal measure of $0$.
But $\epsilon\ll\delta$, so $\mu_{a',B}$ couldn't have a trivial homology because of the convexity of the $\alpha_{L',B}$. So we get that not only $\widehat{a'ob'}$ is straight, but also $\alpha_{L'}$ is not differential along $\mathcal{H}$ direction. So we get the stability of this case.

$\mathbf{Case\;2.}$ If $h_1>0$(the same with $h_1<0$), we let $a'\longrightarrow a''$, here $a''$ is the first point with $\alpha_{L',A}(a'')=0$ ($a''=0$ is possible). Then we know that $[a'',0]$ is a flat and $\widetilde{\mathcal{M}}(0)\subseteq\widetilde{\mathcal{A}}(a'')$, and $\widetilde{\mathcal{A}}(a'')\setminus\widetilde{\mathcal{M}}(0)\neq\emptyset$ \cite{Zheng}. Then there must exist a homoclinic orbit or a periodic orbit in $\widetilde{\mathcal{A}}(a'')$.
\begin{figure}[htp]
 \centering
  \includegraphics[width=5cm,height=2cm]{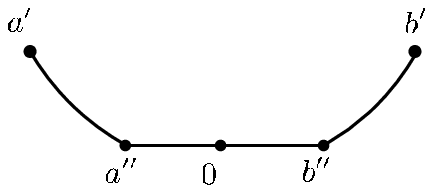}
  \caption{}
  \label{zfig1}
\end{figure}

 $\mathbf{Case\;2.1}$ If There exists a homoclinic orbit to $\widetilde{\mathcal{A}}(a'')$, because the maximal points of the potential are contained in a neighborhood of $(0,0)$ of a radius of order $O(\epsilon)$ (that's why we shrink the width of channel A,B), the minimizing homoclinic is asymptotic to this neighborhood $B(0,\epsilon)$.
\begin{figure}[htp]
 \centering
  \includegraphics[width=9cm,height=2.5cm]{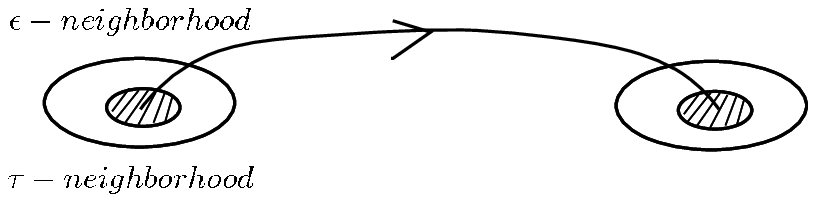}
  \caption{}
  \label{zfig1}
\end{figure}
For any $c\in[a',a'']$, $\widetilde{\mathcal{M}}(c)$ can only be made up of periodic orbits in channel A. Under the weak topology of probability measures, there exists a sequence $c_n\longrightarrow a''$ and $\mu_{L',c_n}\rightharpoonup\mu_{L',0}\in\widetilde{\mathcal{M}}(0)$. Moreover, the minimizing homoclinic orbit $\gamma$ is contained in the Hausdorff limit of $supp\{\mu_{c_n}\}$. As $\gamma$ is a static orbit in $\widetilde{\mathcal{A}}(a'')$, we have
\begin{equation}\label{equation}
\int_{-\infty}^{+\infty}L'(\gamma,\dot{\gamma})dt=\langle a'',[\gamma]\rangle,
\end{equation}
here the $\langle,\rangle$ is the inner product induced by de Rham. In fact, $\langle a'',[\gamma]\rangle$ is really a scalar product because the homology of channel A and $a''\in\mathcal{H}$. As $\epsilon\ll\tau^2\ll\delta\ll1$, we have
\begin{align}\label{align}
\int_{-\infty}^{+\infty}L'(\gamma,\dot{\gamma})dt=&\int_{-\infty}^{+\infty}\langle\dot{\gamma},\dot{\gamma}\rangle_g+u_1(\gamma_2(t))\\
&+u_2(\gamma_2(t))(1-\cos(\gamma_1(t)))+V(\gamma(t))dt\notag\\
\geq&\int_{-T}^{+T}(\dot{\gamma_1}^2+\dot{\gamma_2}^2)+u_1(\gamma_2(t))\notag\\
&+u_2(\gamma_2(t))(1-\cos(\gamma_1(t)))+V(\gamma(t))dt,\notag
\end{align}
here $\gamma\mid_{[-T,T]}$ is the part of $\gamma$ outside the $O(\tau)$ neighborhood of $(0,0)$. Recall that $\epsilon\ll\tau^2\ll\delta\ll1$, we can get a positive lower bound of potential function out of $O(\tau)$ neighborhood of $(0,0)$. As we have supposed \[
\max_{x\in A}-u_1(x)-u_2(x)(1-\cos(x))-V(x)=0\] and the width of channel A is of order $O(\epsilon)$,
 we have:
\begin{align*}
\int_{-T}^{+T}(\dot{\gamma_1}^2+\dot{\gamma_2}^2)+&u_1(\gamma_2(t))+u_2(\gamma_2(t))(1-\cos(\gamma_1(t)))+V(\gamma(t))dt\\
\geq&\int_{-T}^{+T}\dot{\gamma_1}^2+1-\cos\tau-\epsilon dt\\
%=&\int_\tau^{2\pi-\tau}\dot{\gamma}_1+\frac{1-\cos\tau-\epsilon}{\dot{\gamma}_1}d\gamma_1\\
\geq& 2(2\pi-2\tau)\sqrt{1-\cos\tau-\epsilon},
\end{align*}
here $(\gamma_1,\gamma_2)$ is the coordinate of $\gamma$.

At the same time, we have:
\begin{align*}
\langle a'',[\gamma]\rangle=&a''\pi_1([\gamma])=a''\\
\geq& 2(2\pi-2\tau)\sqrt{1-\cos\tau-\epsilon},
\end{align*}
because $[\gamma]\equiv(1,0)$. We already have $a''<a'$. Specially we take $\tau=\frac{1}{8}\pi$, then $2(2\pi-2\tau)\sqrt{1-\cos\tau-\epsilon}>\frac{\pi}{9}$. But the Riemannian metric can be modified in channel B to make $a'<\frac{\pi}{9}$ and there will be a contradiction to our assumption.

$\mathbf{Case\;2.2}$ For the case of periodic orbit coming out, we can give a same proof as above. We just need to modify the integral lower and upper bounds to finite in (\ref{equation}) and (\ref{align}) to get a same contradiction.
\end{proof}

\noindent\textbf{Acknowledgement} The author thanks Prof. C.Q.Cheng and W.Cheng for talking about the details, and thanks L.Wang for suggesting this problem to me.

\end{document}